\newtheorem{thm}{Theorem}[section]
\newtheorem{cor}[thm]{Corollary}
\newtheorem{lem}[thm]{Lemma}
\newtheorem{prop}[thm]{Proposition}
\newtheorem{defn}[thm]{Definition}
\numberwithin{equation}{section}
\newcommand{\be}{\begin{equation}}
\newcommand{\ee}{\end{equation}}
\newcommand{\ben}{\begin{enumerate}}
\newcommand{\een}{\end{enumerate}}
\newcommand{\beq}{\begin{eqnarray}}
\newcommand{\eeq}{\end{eqnarray}}
\newcommand{\beqn}{\begin{eqnarray*}}
\newcommand{\eeqn}{\end{eqnarray*}}
\newcommand{\bpf}{\begin{proof}}
\newcommand{\epf}{\end{proof}}
\newcommand{\bl}{\begin{lem}}
\newcommand{\el}{\end{lem}}
\newcommand{\bp}{\begin{prop}}
\newcommand{\ep}{\end{prop}}
\newcommand{\bd}{\begin{defn}}
\newcommand{\ed}{\end{defn}}
\newcommand{\bt}{\begin{thm}}
\newcommand{\et}{\end{thm}}
\begin{document}
\title{On compact Ricci solitons in Finsler geometry}
\author{B. Bidabad\footnote{The corresponding author} and M.Yar Ahmadi}
\date{}
\maketitle
\begin{abstract}
 Ricci solitons on Finsler spaces, previously
developed by the present authors, are  a generalization of Einstein spaces, which can be considered as a  solution to
the Ricci flow on compact Finsler manifolds. In the present work it is shown that on a Finslerian space, a forward complete shrinking Ricci soliton is compact if and only if it is bounded. Moreover, it is proved that a compact shrinking Finslerian Ricci soliton has finite fundamental group and hence the first de Rham cohomology group vanishes.
\end{abstract}
{\small\emph{Mathematics Subject Classification}: Primary 53C60; Secondary 53C44, 35C08.\\
\emph{Keywords and phrases}: Finsler geometry, Ricci soliton, quasi-Einstein, shrinking.}
\section{Introduction}
The Ricci flow in Riemannian geometry was introduced by R. S. Hamilton in 1982, cf. \cite{Ha}, and since then
has been extensively studied thanks to its applications in geometry, physics and different branches of real world problems.
Quasi-Einstein metrics or Ricci solitons are considered as a solution to the Ricci flow equation and are subject of great interest in geometry and physics specially in relation with string theory, cf. \cite{FG}.

 Let $(M,g)$ be  a Riemannian manifold, a triple $(M,g,X)$ is said to be a \emph{quasi-Einstein metric} or \emph{Ricci soliton} if $g$ satisfies  the  equation
 \begin{align}\label{Eq;RST}
{2Ric+\mathcal{L}_{X} g=2\lambda g,}
\end{align}
where $Ric$ is the Ricci tensor, $X$  a smooth vector field on $M$, $\mathcal{L}_{X}$  the Lie derivative along $X$ and $\lambda$  a real constant. A Ricci soliton is said to be \emph{shrinking, steady} or \emph{expanding} if $\lambda>0$, $\lambda=0$ or $\lambda<0$, respectively. If the vector field $X$ is the gradient of a function $f$, then $(M,g,X)$ is said to be \emph{ gradient} and  (\ref{Eq;RST}) takes the familiar form
\begin{equation*}
{Ric+\nabla\nabla f=\lambda g.}
\end{equation*}
 On a compact Riemannian manifold a quasi-Einstein metric is a special solution to the Ricci flow equation defined by
\begin{align*}
\frac{\partial}{\partial t} g(t)=-2Ric, \quad {g(t=0):=g_{_0}}.
\end{align*}
J. Lott has shown that the fundamental group of a closed Riemannian manifold is finite for any gradient shrinking Ricci soliton, cf. \cite{Lott}. M.F. L\'{o}pez and E.G. R\'{i}o have proved that a Riemannian compact shrinking Ricci soliton has finite fundamental group, cf. \cite{FG}. W. Wylie has shown that a Riemannian complete shrinking Ricci soliton has finite fundamental group, cf. \cite{Wylie}.

The concept of Ricci flow  on Finsler manifolds is defined first by D. Bao, cf. \cite{bao}, using the Ricci tensor defined by H. Akbar-Zadeh,  \cite{Ak3}.
Recently the present authors have developed the concept of Ricci soliton on Finsler spaces as a generalization of Einstein spaces, cf. \cite{BY}.  It is proved that if there is a Ricci soliton on a compact Finsler manifold then there exists a solution to the Ricci flow equation and vice-versa. Since Finslerian Ricci solitons generalize Einstein manifolds, it is natural to ask whether classical results like Bonnet-Myers theorem for Finsler-Einstein manifolds of positive Ricci scalar remain valid for Finslerian Ricci solitons. In the present work,  in analogy with Riemannian space the shrinking Finslerian Ricci soliton is defined and it is shown that a forward complete shrinking Finslerian Ricci soliton $ (M,F,V) $ is compact if and only if $ \Vert V\Vert $ is bounded. Moreover, it is proved that in this case the fundamental group is finite and as a consequence the first de Rham cohomology group of $M$ vanishes.

\section{Preliminaries and notations}
 Let $M$ be a real n-dimensional differentiable  manifold. We denote by $TM$ its tangent bundle and by $\pi :TM_{0} \longrightarrow M$,  the fiber bundle of non zero tangent vectors.
A \emph{Finsler structure} on $M$ is a function $F:TM\longrightarrow [0,\infty)$, with the following properties:\\
I. Regularity: $F$ is $C^{\infty}$ on the entire slit tangent bundle $TM_{0}=TM\backslash 0$.\\
II. Positive homogeneity: $F(x,\lambda y)=\lambda F(x,y)$ for all $\lambda > 0$.\\
III. Strong convexity: The $n\times n$ Hessian matrix $g_{ij}=([\frac{1}{2} F^{2}]_{y^{i} y^{j}})$ is positive definite at every point of $TM_{0}$.
A \emph{Finsler manifold} $(M,F)$ is a pair consisting of a differentiable manifold $M$ and a Finsler structure $F$.
The formal Christoffel symbols of  second kind and spray coefficients are denoted  respectively by
\begin{align*}
\gamma^i_{jk}:=g^{is}\frac{1}{2}\big(\frac{\partial g_{sj}}{\partial x^k}-\frac{\partial g_{jk}}{\partial x^s}+\frac{\partial g_{ks}}{\partial x^j}\big),
\end{align*}
 where $g_{ij}(x,y)=[\frac{1}{2} F^2]_{y^{i}y^{j}}$, and
 $
 G^i:=\frac{1}{2}\gamma^i_{jk}y^j y^k.
$
We consider also the \emph{reduced curvature tensor} $R^i_k$ which is expressed entirely in terms of the $x$ and $y$ derivatives of spray coefficients $G^i$.
 \begin{align}\label{E,Ricci scalar}
 R^{i}_{k}:=\frac{1}{F^2}\big(2\frac{\partial G^i}{\partial x^k}-\frac{\partial^2 G^i}{\partial x^j \partial y^k}y^j +2G^j\frac{\partial^2 G^i}{\partial y^j \partial y^k} - \frac{\partial G^i}{\partial y^j}\frac{\partial G^j}{\partial y^k}\big).
\end{align}
In the general Finslerian setting, one of the Ricci tensors is  introduced by H. Akbar-Zadeh \cite{AZ} as follows.
\begin{equation}\label{E;Ricci flow}
{Ric_{jk} :=[\frac{1}{2} F^{2} \mathcal{R}ic]_{y^{j} y^{k}}},
\end{equation}
where
$
\mathcal{R}ic=R^{i}_{i}
$
and  $R^i_k$ is defined by (\ref{E,Ricci scalar}). Akbar-Zadeh's definition of Einstein-Finsler space related to this Ricci tensor is obtained as a critical point of an Einstein-Hilbert functional and, (see \cite{Ak3} chapter IV). One of the advantages of the Ricci  quantity defined here  is its independence of the choice of Cartan, Berwald or Chern (Rund) connections. Based on the Akbar-Zadeh's Ricci tensor, in analogy with the equation (\ref{E;Ricci flow})
, D. Bao has considered, the following natural extension of \emph{Ricci flow } in Finsler geometry, cf.  \cite{bao},
\begin{align*}
\frac{\partial}{\partial t} g_{jk}=-2Ric_{jk}, \quad {g(t=0):=g_{_0}}.
\end{align*}
This equation is equivalent to the following differential equation
\begin{align*}
{\frac{\partial}{\partial t}(\log F(t))=-\mathcal{R}ic,}\quad {F(t=0):=F_{_0}},
\end{align*}
where, $F_{_0}$ is the initial Finsler structure.
Let $V=V^{i}(x) \frac{\partial}{\partial x_i}$  be a vector field on $M$.

The Lie derivative of a Finsler metric tensor $ g_{jk} $ is given in the following tensorial form by
\begin{align}\label{Eq;Lieder}
\mathcal{L}_{\hat V}g_{jk}=\nabla_jV_k+\nabla_kV_j+2(\nabla_0V^l)C_{ljk},
\end{align}
where $\hat V$ is the complete lift of a vector field $V$ on $M$, $ \nabla $ is the Cartan connection, $ \nabla_0=y^p\nabla_p $ and $ \nabla_p=\nabla_{\frac{\delta}{\delta x^p}} $, (see \cite{Yano} at p. 180 and see \cite{BJ}).\\

Let $M$ be a connected smooth manifold, then there exists a simply connected smooth manifold $\tilde M$, called the universal covering manifold of $M$, and a smooth covering map $p:\tilde M\longrightarrow M$ such that it is unique up to a diffeomorphism. The complete lift of $p$ is a map $\bar p:T\tilde M\longrightarrow TM$ is given by $$ \bar p(\tilde x,\tilde y)=(p(\tilde x),\tilde y^i\frac{\partial p}{\partial \tilde x^i})=(p(\tilde x),\tilde y^i\frac{\partial p^j}{\partial \tilde x^i}\frac{\partial}{\partial x^j}), $$
where $\tilde y\in T_{\tilde x}\tilde M$.
\section{Shrinking Finslerian Ricci soliton}
Let $(M,F_0)$ be a Finsler manifold and $V=V^{i}(x) \frac{\partial}{\partial x^{i}}$
 a vector field on $M$. We call the triple $(M,F_0,V)$ a Finslerian \emph{quasi-Einstein} or a \emph{Finslerian Ricci soliton} if ${g_{jk}}$ the Hessian related to the Finsler structure $F_0$ satisfies
 \begin{align}\label{Eq;DefRicciSoliton+}
2Ric_{jk}+\mathcal{L}_{\hat{V}}  {g_{jk}}=2 \lambda g_{jk},
\end{align}
where,   $\hat V$ is the complete lift of $V$ and  $\lambda \in {\mathbb{R}}$.  A Finslerian Ricci soliton is said to be \emph{shrinking, steady} or \emph{expanding} if $\lambda>0$, $\lambda=0$ or $\lambda<0$, respectively. The Finslerian Ricci soliton is said to be forward complete (resp. compact) if $ (M,F_0) $ is forward complete (resp. compact). Note that according to the Hopf-Rinow's theorem,  the both notions forward complete and forward geodesically complete are equivalent. Denote by $SM$ the sphere bundle, defined by $SM:=\bigcup\limits _{x\in M} S_xM$ where $S_xM:=\{y\in T_xM | F(x,y)=1\}$. For a vector field $ X=X^i{(x)}\frac{\partial}{\partial x^i} $ on $ M $ define
\begin{align}
\Vert X\Vert_x=\max\limits_{y\in S_xM}\sqrt{g_{ij}(x,y)X^iX^j},
\end{align}
where $x\in M$ (see \cite{bcs} at p. 321). Since $ S_xM $ is compact, $ \Vert X\Vert_x $ is well defined.
\begin{thm}\label{theorem1}
Let $(M,F_0)$ be a  forward geodesically complete Finsler manifold satisfying
\begin{align}\label{Eq;a}
2Ric_{jk}+\mathcal{L}_{\hat{V}}  {g_{jk}}\geq 2 \lambda g_{jk},
\end{align}
where, $\lambda > 0$.
  Then, $M$ is compact if and only if $\Vert V\Vert $ is bounded on $M$ and moreover, in such a case, $diam(M)\leq \frac{\pi}{\lambda}\big(D+\sqrt{D^2+\lambda(n-1)}\big)$.
\end{thm}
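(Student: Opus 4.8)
The plan is to dispatch the two implications separately, the reverse one being immediate and the forward one carrying all the content. If $M$ is compact, then since each fibre $S_xM$ is compact the function $x\mapsto\Vert V\Vert_x$ is continuous on $M$ and hence attains a maximum, so $\Vert V\Vert$ is bounded. The real work is the converse together with the diameter estimate, which I would obtain by a Bonnet--Myers type second-variation argument adapted to the soliton inequality (\ref{Eq;a}). Throughout I set $D:=\sup_{x\in M}\Vert V\Vert_x<\infty$, the finiteness being exactly the boundedness hypothesis.

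Fix $p,q\in M$; by forward completeness and Hopf--Rinow there is a unit-speed minimizing geodesic $\gamma:[0,\ell]\to M$ from $p$ to $q$ with $\ell=d(p,q)$. I would pick a $g_{\dot\gamma}$-orthonormal parallel frame $e_1(t),\dots,e_{n-1}(t)$ along $\gamma$, perpendicular to $\dot\gamma$, and for a test function $\phi$ with $\phi(0)=\phi(\ell)=0$ set $W_i=\phi\,e_i$. Since $\gamma$ minimizes, the index form is nonnegative on each $W_i$, and summing over $i$ collapses the curvature terms to the Finsler Ricci scalar,
\[
0\le\sum_{i=1}^{n-1}I(W_i,W_i)=\int_0^\ell\big[(n-1)\,\phi'^2-\phi^2\,\mathcal{R}ic(\dot\gamma)\big]\,dt.
\]
Contracting (\ref{Eq;a}) with $\dot\gamma^j\dot\gamma^k$ and using $Ric_{jk}\dot\gamma^j\dot\gamma^k=F^2\mathcal{R}ic=\mathcal{R}ic$ together with $g_{jk}\dot\gamma^j\dot\gamma^k=F^2=1$ yields $\mathcal{R}ic(\dot\gamma)\ge\lambda-\tfrac12(\mathcal{L}_{\hat V}g)(\dot\gamma,\dot\gamma)$.

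The genuinely Finslerian step is to simplify the Lie-derivative term along $\gamma$. Contracting (\ref{Eq;Lieder}) with $\dot\gamma^j\dot\gamma^k$, the Cartan contribution drops out because $C_{ljk}\dot\gamma^j\dot\gamma^k=0$ (the Cartan tensor annihilates the reference direction), and metric compatibility of the Cartan connection along the geodesic, where $\dot\gamma$ is simultaneously the reference vector and parallel, gives
\[
\tfrac12(\mathcal{L}_{\hat V}g)(\dot\gamma,\dot\gamma)=g_{\dot\gamma}(\nabla_{\dot\gamma}V,\dot\gamma)=\frac{d}{dt}\,g_{\dot\gamma}(V,\dot\gamma).
\]
Inserting this and integrating by parts (the boundary terms vanish as $\phi(0)=\phi(\ell)=0$) turns the estimate into
\[
0\le(n-1)\int_0^\ell\phi'^2\,dt-\lambda\int_0^\ell\phi^2\,dt-2\int_0^\ell\phi\phi'\,g_{\dot\gamma}(V,\dot\gamma)\,dt.
\]
By the Cauchy--Schwarz inequality for $g_{\dot\gamma}$ together with $F(\dot\gamma)=1$ one has $|g_{\dot\gamma}(V,\dot\gamma)|\le\sqrt{g_{\dot\gamma}(V,V)}\le\Vert V\Vert_{\gamma(t)}\le D$, and a second Cauchy--Schwarz in $t$ gives $\int_0^\ell|\phi\phi'|\,dt\le(\int_0^\ell\phi^2)^{1/2}(\int_0^\ell\phi'^2)^{1/2}$. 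Taking $\phi(t)=\sin(\pi t/\ell)$, so that $\int_0^\ell\phi'^2=\pi^2/(2\ell)$, $\int_0^\ell\phi^2=\ell/2$ and the product of norms equals $\pi/2$, the inequality reduces to $\lambda\ell^2-2\pi D\,\ell-(n-1)\pi^2\le0$; solving this quadratic in $\ell$ yields $\ell\le\frac{\pi}{\lambda}\big(D+\sqrt{D^2+\lambda(n-1)}\big)$. As $p,q$ were arbitrary, $\mathrm{diam}(M)$ is bounded by the same quantity, and a forward complete Finsler manifold of finite diameter is compact by Hopf--Rinow, which closes the converse and establishes the stated bound.

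The main obstacle I anticipate is precisely the middle, Finslerian, step: justifying the traced index form and the identity $\tfrac12(\mathcal{L}_{\hat V}g)(\dot\gamma,\dot\gamma)=\frac{d}{dt}g_{\dot\gamma}(V,\dot\gamma)$. Both rest on controlling the $y$-dependence of $g_{ij}(x,y)$ and the Cartan-tensor corrections to metric compatibility, which are invisible in the Riemannian case; the computation goes through only because along a geodesic the reference direction $\dot\gamma$ is parallel, forcing every $C$-term evaluated on $\dot\gamma$ to vanish. Everything after that point is the classical Bonnet--Myers quadratic-inequality bookkeeping.
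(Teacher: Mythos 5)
Your proposal is correct, and its genuinely Finslerian core coincides exactly with the paper's: both proofs contract the soliton inequality with $\dot\gamma^j\dot\gamma^k$ along a minimal unit-speed geodesic, kill the Cartan term via $C_{ljk}\dot\gamma^j\dot\gamma^k=0$, use metric compatibility of the Cartan connection to rewrite $\tfrac12\dot\gamma^j\dot\gamma^k\mathcal{L}_{\hat V}g_{jk}$ as $\frac{d}{dt}(\dot\gamma^kV_k)$, and bound $|\dot\gamma^kV_k|\le\Vert V\Vert_{\gamma(t)}\le D$ by Cauchy--Schwarz. Where you diverge is in how the conclusion is extracted from the resulting inequality $\mathcal{R}ic(\dot\gamma)\ge\lambda+\frac{d}{dt}\bigl(-\dot\gamma^kV_k\bigr)$: the paper stops there and invokes Anastasiei's generalization of Myers' theorem as a black box to obtain compactness and the diameter bound, whereas you carry out the Bonnet--Myers second-variation argument explicitly (traced index form over a parallel $g_{\dot\gamma}$-orthonormal frame, test function $\sin(\pi t/\ell)$, integration by parts, and the quadratic $\lambda\ell^2-2\pi D\ell-(n-1)\pi^2\le 0$), recovering precisely the stated bound $\frac{\pi}{\lambda}\bigl(D+\sqrt{D^2+\lambda(n-1)}\bigr)$. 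Your version is self-contained at the cost of having to justify the Finslerian index-form inequality and the identity $\sum_i g_{\dot\gamma}(R(e_i,\dot\gamma)\dot\gamma,e_i)=\mathcal{R}ic(\dot\gamma)$ for a unit flagpole; both are standard (Bao--Chern--Shen, Ch.~7), so the argument closes. The paper's version is shorter but leans entirely on the cited theorem for the quantitative conclusion.
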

\begin{proof}
Let $ M $ be a compact manifold, it is clear that $\Vert V\Vert $ is bounded on $M$. Conversely, let $ p,q $ be two points in $ M $ joined by a minimal geodesic $\gamma$ parameterized by the arc length $t$, $\gamma:[0,\infty)\longrightarrow M$. Using (\ref{Eq;Lieder}) we have along $ \gamma $
\begin{align}\label{Eq;1}
{\gamma^{\prime}}^j{\gamma^{\prime}}^k\mathcal{L}_{\hat V}g_{jk}={\gamma^{\prime}}^j{\gamma^{\prime}}^k\big(\nabla_jV_k+\nabla_kV_j+2(\nabla_0V^l)C_{ljk}\big).
\end{align}
Along $ \gamma $, we have $ {\gamma^{\prime}}^j{\gamma^{\prime}}^k(\nabla_0V^l)C_{ljk}(\gamma(t),\gamma^{\prime}(t))=0 $. Hence (\ref{Eq;1}) reduces to
\begin{align}\label{Eq;2}
{\gamma^{\prime}}^j{\gamma^{\prime}}^k\mathcal{L}_{\hat V}g_{jk}=2{\gamma^{\prime}}^j{\gamma^{\prime}}^k\nabla_jV_k.
\end{align}
On the other hand, by compatibility of metric with the Cartan connection, we have along the geodesic $\gamma$
\begin{align}\label{Eq;3}
{\gamma^{\prime}}^j{\gamma^{\prime}}^k\nabla_jV_k=
\nabla_{{\gamma^{\prime}}^j\frac{\delta}{\delta x^j}}({\gamma^{\prime}}^kV_k)=
\nabla_{\hat\gamma^{\prime}}({\gamma^{\prime}}^kV_k)=
\frac{d}{dt}({\gamma^{\prime}}^kV_k),
\end{align}
where $\hat\gamma^{\prime}={\gamma^{\prime}}^j\frac{\delta}{\delta x^j}$. Replacing (\ref{Eq;3}) in (\ref{Eq;2}) we have
\begin{align}\label{Eq;4}
{\gamma^{\prime}}^j{\gamma^{\prime}}^k\mathcal{L}_{\hat V}g_{jk}=2\frac{d}{dt}({\gamma^{\prime}}^kV_k).
\end{align}
By means of (\ref{Eq;a}) and (\ref{Eq;4}) we get
\begin{align*}
2{\gamma^{\prime}}^j{\gamma^{\prime}}^kRic_{jk}+2\frac{d}{dt}({\gamma^{\prime}}^kV_k)\geq 2\lambda {\gamma^{\prime}}^j{\gamma^{\prime}}^kg_{jk}.
\end{align*}
By last inequality we conclude that
\begin{align*}
{\gamma^{\prime}}^j{\gamma^{\prime}}^kRic_{jk}\geq\lambda {\gamma^{\prime}}^j{\gamma^{\prime}}^kg_{jk}-\frac{d}{dt}({\gamma^{\prime}}^kV_k)=\lambda+\frac{d}{dt}(-{\gamma^{\prime}}^kV_k).
\end{align*}
On the other hand, by means of the Cauchy-Schwarz inequality, we have along the geodesic $\gamma$
\begin{align*}
|-{\gamma^{\prime}}^kV_k|&=|{\gamma^{\prime}}^kV_k|=|g_{kl}(\gamma(t),\gamma^{\prime}(t)){\gamma^{\prime}}^kV^l|\\&\leq |g_{pq}(\gamma(t),\gamma^{\prime}(t)){\gamma^{\prime}}^p{\gamma^{\prime}}^q|^{\frac{1}{2}}|g_{rs}(\gamma(t),\gamma^{\prime}(t))V^rV^s|^{\frac{1}{2}}\\&\leq \max\limits_{y\in S_{_{_{\gamma(t)}}}M}|g_{rs}(\gamma(t),y)V^rV^s|^{\frac{1}{2}}=\Vert V\Vert_{_{\gamma(t)}}.
\end{align*}
Since $ \Vert V\Vert $ is assumed to be bounded on $ M $, there exists a positive constant $ D $ such that $ \Vert V\Vert_{_{\gamma(t)}}\leq D $ and therefore  along $ \gamma $, $ |-{\gamma^{\prime}}^kV_k|\leq D $. Now, the result follows from generalization of Myer's theorem, cf. \cite{Anas}. That is, $M$ is compact and moreover it is bounded from above by $diam(M)\leq \frac{\pi}{\lambda}(D+\sqrt{D^2+\lambda(n-1)})$. This completes the proof.
\end{proof}

\begin{cor}
Let $(M,F,V)$ be a forward complete shrinking Finslerian Ricci soliton. Then, $M$ is compact if and only if $ \Vert V\Vert $ is bounded on $M$ and moreover, in this case, $diam(M)\leq \frac{\pi}{\lambda}(D+\sqrt{D^2+\lambda(n-1)})$.
\end{cor}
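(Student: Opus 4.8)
The plan is to deduce this corollary directly from Theorem \ref{theorem1}, since a shrinking Finslerian Ricci soliton is nothing but a special instance of the hypotheses treated there. First I would recall that, by the defining relation (\ref{Eq;DefRicciSoliton+}) of a Finslerian Ricci soliton, the Hessian $g_{jk}$ of $F$ satisfies $2Ric_{jk}+\mathcal{L}_{\hat V}g_{jk}=2\lambda g_{jk}$, and that the soliton being \emph{shrinking} means precisely $\lambda>0$. The key observation is then that an equality trivially implies the corresponding inequality: the soliton equation yields at once $2Ric_{jk}+\mathcal{L}_{\hat V}g_{jk}\geq 2\lambda g_{jk}$, which is exactly condition (\ref{Eq;a}) with the positive constant $\lambda$.

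Next I would match the remaining hypothesis. By assumption $(M,F,V)$ is forward complete, and as noted after (\ref{Eq;DefRicciSoliton+}), forward completeness and forward geodesic completeness coincide by the Hopf--Rinow theorem; hence $(M,F)$ is forward geodesically complete. All the hypotheses of Theorem \ref{theorem1} are therefore in force, and applying it gives both implications simultaneously: $M$ is compact if and only if $\Vert V\Vert$ is bounded on $M$, where $D$ denotes the uniform bound for $\Vert V\Vert$. In the compact case the same theorem supplies the diameter estimate $diam(M)\leq \frac{\pi}{\lambda}\big(D+\sqrt{D^2+\lambda(n-1)}\big)$.

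Because the corollary is an immediate specialization, I do not expect any real obstacle; the only points requiring care are verifying that the strict soliton equation does feed into the inequality form (\ref{Eq;a}) with the \emph{same} sign of $\lambda$, and that the quantity $D$ appearing in the diameter bound is indeed the bound on $\Vert V\Vert$ furnished by the boundedness (equivalently, compactness) assumption on $M$. Once these identifications are recorded, the statement follows verbatim from Theorem \ref{theorem1}.
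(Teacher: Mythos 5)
Your proposal is correct and is exactly the argument the paper intends (the corollary is stated without proof precisely because it is this immediate specialization): the soliton equality (\ref{Eq;DefRicciSoliton+}) implies the inequality (\ref{Eq;a}), shrinking gives $\lambda>0$, and Hopf--Rinow identifies forward completeness with forward geodesic completeness, so Theorem \ref{theorem1} applies verbatim. Nothing further is needed.
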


\begin{thm}\label{thm2}
Let $(M,F)$ be a compact Finsler manifold satisfying (\ref{Eq;a}). Then the fundamental group $\pi_1(M)$ of $M$ is finite and its first cohomology group vanishes, i.e.,  $H^1_{dR}(M)={0}$.
\end{thm}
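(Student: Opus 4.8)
The plan is to reduce the statement to Theorem \ref{theorem1} by passing to the universal covering manifold and there producing a compact model. Let $p:\tilde M\longrightarrow M$ be the universal covering map and $\bar p:T\tilde M\longrightarrow TM$ its complete lift, as recalled in Section 2. Since $p$ is a local diffeomorphism, I would pull back the Finsler structure by setting $\tilde F:=F\circ\bar p$; this turns $(\tilde M,\tilde F)$ into a Finsler manifold and makes $p$ a local isometry. The vector field $V$ lifts to a vector field $\tilde V$ on $\tilde M$ determined locally by $dp(\tilde V)=V\circ p$, which is unambiguous because $p$ is a local diffeomorphism.

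Next I would transport the hypothesis (\ref{Eq;a}) to $\tilde M$. All the tensors appearing there, namely the Akbar-Zadeh Ricci tensor $Ric_{jk}$ defined through (\ref{E,Ricci scalar}) and (\ref{E;Ricci flow}), the fundamental tensor $g_{jk}$, and the Finslerian Lie derivative $\mathcal{L}_{\hat V}g_{jk}$ given by (\ref{Eq;Lieder}), are built by differentiation from $F$ alone and are therefore natural under the local isometry $p$. Consequently $\tilde V$ satisfies the pulled-back inequality $2\widetilde{Ric}_{jk}+\mathcal{L}_{\hat{\tilde V}}\tilde g_{jk}\geq 2\lambda\,\tilde g_{jk}$ with the same $\lambda>0$. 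Since $M$ is compact, the continuous function $x\mapsto\Vert V\Vert_x$ attains a finite maximum $D$ on $M$, and because $\Vert\tilde V\Vert_{\tilde x}=\Vert V\Vert_{p(\tilde x)}$ for every $\tilde x\in\tilde M$, the norm $\Vert\tilde V\Vert$ is bounded by $D$ on $\tilde M$. Moreover $(M,F)$ is forward complete by Hopf-Rinow, and forward completeness lifts along the local isometry $p$, so $(\tilde M,\tilde F)$ is forward geodesically complete.

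At this point the universal cover satisfies all the hypotheses of Theorem \ref{theorem1}, whence $\tilde M$ is compact. It then follows that the fibre $p^{-1}(x)$ is a discrete closed subset of the compact space $\tilde M$ and is therefore finite; as this fibre is in bijection with $\pi_1(M)$, the fundamental group is finite. Finally, to obtain $H^1_{dR}(M)=0$ I would combine the de Rham and universal-coefficient isomorphisms with the Hurewicz theorem: $H^1_{dR}(M)\cong\mathrm{Hom}(H_1(M;\mathbb{Z}),\mathbb{R})\cong\mathrm{Hom}(\pi_1(M)^{\mathrm{ab}},\mathbb{R})$, and since $\pi_1(M)$ is finite this group is trivial, giving the vanishing of the first de Rham cohomology.

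The main obstacle is the middle step, namely verifying rigorously that the three Finslerian quantities in (\ref{Eq;a}) are invariant under the local isometry $p$ so that the inequality descends to $\tilde M$ with an unchanged $\lambda$. For the metric and the Ricci tensor this is a matter of their local, second-order definition from $F$; the delicate point is the Lie-derivative term, where one must check that the complete lift $\hat{\tilde V}$, the Cartan connection $\nabla$, and the Cartan tensor $C_{ljk}$ all commute with $\bar p$, so that (\ref{Eq;Lieder}) pulls back termwise.
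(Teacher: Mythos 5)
Your proposal is correct and follows essentially the same route as the paper: pull back $F$ and lift $V$ to the universal cover, transport the inequality (\ref{Eq;a}) using naturality of $g_{jk}$, $Ric_{jk}$ and the Lie derivative under the local isometry, bound $\Vert \tilde V\Vert$ by compactness of $M$, invoke Theorem \ref{theorem1} to get compactness of $\tilde M$, and conclude via finiteness of the fibre. In fact you make explicit two points the paper leaves implicit (forward completeness of the lifted structure and the Hurewicz/universal-coefficients argument for $H^1_{dR}(M)=0$), while the ``delicate point'' you flag about the Lie-derivative term is likewise only asserted, not verified, in the paper itself.
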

\begin{proof}
Let $ \tilde M $ be the universal covering manifold of $M$ with the smooth covering map $ p:\tilde M\longrightarrow M $.  Pull back of complete lift of the smooth covering map $p$, i.e., $\bar p^*F:=F\circ \bar p:T\tilde M\longrightarrow [0,\infty) $ is a Finsler structure on $\tilde M$. In fact, we  check simply, the three conditions of Finsler structure. We have the regularity condition since $F$ and $p$ are $C^{\infty}$, and so is $\bar p^*F $. Next,
\begin{align*}
\bar p^*F(x,\lambda y)&=F\circ \bar p(x,\lambda y)=F(p(x),\lambda y^i\frac{\partial p}{\partial x^i})\\&=\lambda F(p(x),y^i\frac{\partial p}{\partial x^i})=\lambda\bar p^*F(x,y).
\end{align*}
Thus the positive homogeneity is satisfied. Finally, assume that $\bar p^*x^i=\tilde x^i $ and $\bar p^*y^i=\tilde y^i $. For strong convexity we have
\begin{align*}
\tilde g_{ij}:=[\frac{1}{2}(\bar p^*F)^2]_{\tilde y^i\tilde y^j}=\frac{1}{2}\frac{\partial^2((\bar p^*F)^2)}{\partial \tilde y^i\partial \tilde y^j}=\frac{1}{2}\frac{\partial^2(\bar p^*F^2)}{\partial \tilde y^i\partial \tilde y^j}.
\end{align*}
One can easily check that
$$\frac{\partial(\bar p^*F^2)}{\partial \tilde y^i}=\bar p^*\frac{\partial F^2}{\partial y^i},$$
from which
\begin{align}\label{Eq:pd}
\tilde g_{ij}=[\frac{1}{2}(\bar p^*F)^2]_{\tilde y^i\tilde y^j}=\frac{1}{2}\frac{\partial^2(\bar p^*F^2)}{\partial \tilde y^i\partial \tilde y^j}=\bar p^*[\frac{1}{2}F^2]_{ y^i y^j}=\bar p^*g_{ij}.
\end{align}
Using the facts that $[\frac{1}{2}F^2]_{ y^i y^j}$ is positive definite on $TM_0$ and $ \bar p^* $ is a local diffeomorphism (note that $p$ is the smooth covering map), $\bar p^*[\frac{1}{2}F^2]_{ y^i y^j}$ is also positive definite on $T\tilde M_0$ and hence $\tilde F:=\bar p^*F $ defines a Finsler structure on $T\tilde M_0$. Moreover $(\tilde M,\tilde F)$ is locally isometric to $(M,F)$. Let $W$ denote the lift of $V$, that is, $W:=p^*V=(p^{-1})_*V$. More precisely , since $p$ is a local diffeomorphism, we can define $W:=p^*V=(p^{-1})_*V$. By means of the local isometry $p:(\tilde M,\tilde F)\longrightarrow (M,F)$ and the inequality (\ref{Eq;a}), we have
\begin{align*}
\bar p^*(2Ric_{jk}+\mathcal{L}_{\hat{V}}  {g_{jk}})\geq 2 \bar p^*(\lambda g_{jk}).
\end{align*}
By linearity of $\bar{p}^*$ we get
\begin{align}\label{ineqaulity}
2\bar p^*Ric_{jk}+\bar p^*\mathcal{L}_{\hat{V}}  {g_{jk}}\geq 2\lambda \bar p^*( g_{jk}).
\end{align}
By means of (\ref{Eq:pd}), $W=p^*V$ and commutativity of Lie derivative  and the pull back $\bar{p}^*$ we obtain
\begin{align}\label{Eq;5}
 \bar p^*\mathcal{L}_{\hat{V}}  {g_{jk}}=\mathcal{L}_{\hat{W}}  {\tilde g_{jk}}.
\end{align}
On the other hand, one can easily check that $\tilde Ric_{jk}=\bar p^*Ric_{jk}$. In fact we have
\begin{align*}
\bar p^*Ric_{jk}&=\bar p^*[\frac{1}{2} F^{2} \mathcal{R}ic]_{y^{j} y^{k}}=\frac{1}{2}\bar p^*\frac{\partial^2(F^{2} \mathcal{R}ic)}{\partial y^i\partial y^j}\\&=\frac{1}{2}\frac{\partial^2}{\partial\tilde y^i\partial \tilde y^j}(\bar p^*(F^{2} \mathcal{R}ic))=\frac{1}{2}\frac{\partial^2}{\partial\tilde y^i\partial\tilde y^j}\Big(\bar p^*(F^{2})\bar p^*(\mathcal{R}ic)\Big).
\end{align*}
Since $\bar p^*(\mathcal{R}ic)=\tilde{\mathcal{R}}ic$, cf. \cite{BY}, and $ \bar p^*(F^{2})=\tilde F^2 $, we get
\begin{align}\label{Eq;6}
\bar p^*Ric_{jk}=\frac{1}{2}\frac{\partial^2}{\partial\tilde y^i\partial\tilde y^j}\Big(\bar p^*(F^{2})\bar p^*(\mathcal{R}ic)\Big)=\frac{1}{2}\frac{\partial^2}{\partial\tilde y^i\partial\tilde y^j}(\tilde F^{2}\tilde{ \mathcal{R}}ic)=\tilde Ric_{jk}.
\end{align}
Replacing (\ref{Eq:pd}), (\ref{Eq;5}) and (\ref{Eq;6}) in (\ref{ineqaulity}), leads to
\begin{align*}
2\tilde Ric_{jk}+\mathcal{L}_{\hat{W}}  {\tilde g_{jk}}\geq 2\lambda \tilde  g_{jk}.
\end{align*}
On the other hand we have
\begin{align}\label{norm}
\Vert W\Vert_{\tilde x}=&\max\limits_{\tilde y\in S_{\tilde x}\tilde M}\big((\bar p^*g_{ij})(\tilde x,\tilde y)W^iW^j\big)^\frac{1}{2}=\max\limits_{\tilde y\in S_{\tilde x}\tilde M}\big(g_{ij}(p(\tilde x),\tilde y^i\frac{\partial p}{\partial \tilde x^i})\bar p_*W^i\bar p_*W^j\big)^\frac{1}{2}\nonumber\\&\leq \max\limits_{y\in S_{\tilde x}\tilde M}\big(g_{ij}(p(\tilde x),y)\bar p_*W^i\bar p_*W^j\big)^\frac{1}{2}=\Vert\bar p_*W \Vert_{_{p(\tilde x)}}.
\end{align}
 By compactness of $M$, the norm $\Vert \bar p_*W\Vert$ is bounded on $M$ and therefore by means of (\ref{norm}) the norm $\Vert W\Vert$ is bounded on $\tilde M$. It follows from Theorem \ref{theorem1} that $(\tilde M,\tilde F)$ is compact. Thus the closed subset $p^{-1}(x)$ of $\tilde M$ is compact and being discrete is finite. By assumption, $M$ is connected, so all of its fundamental groups $\pi_1(M,x)$ are isomorphic, where $x$ denotes the base point. Since $\tilde M$ is a universal cover, $\pi_1(M,x)$ is bijective with $p^{-1}(x)$ and therefore $\pi_1(M)$ is finite. Thus, by a well known result the first cohomology group $H^1_{dR}(M)=0$. This completes the proof.
\end{proof}
\begin{cor}
Let $(M,F,V)$ be a compact shrinking Finslerian Ricci soliton. Then the fundamental group $\pi_1(M)$ of $M$ is finite and therefore $H^1_{dR}(M)=0$.
\end{cor}
\begin{cor}
Let $(M,F,V)$ be a compact shrinking Finslerian Ricci soliton. Then the fundamental group $\pi_1(SM)$ of $SM$ is finite and therefore $H^1_{dR}(SM)=0$.
\end{cor}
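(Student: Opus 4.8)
The plan is to exploit the fibre-bundle structure of the sphere bundle together with the compactness of the universal cover already established in the proof of Theorem \ref{thm2}. By strong convexity of $F$, each indicatrix $S_xM$ is diffeomorphic to the standard sphere $S^{n-1}$, so $\pi:SM\to M$ is a fibre bundle with fibre $S^{n-1}$. Let $p:\tilde M\to M$ be the universal covering and $\bar p$ its complete lift; as shown in the proof of Theorem \ref{thm2}, $\tilde M$ is compact, $\tilde F=\bar p^*F$ is a Finsler structure, and $\bar p$ is a local diffeomorphism preserving the Finsler norm. Consequently $\bar p$ restricts to a covering map $\bar p:S\tilde M\to SM$ whose number of sheets equals that of $p$, namely $|\pi_1(M)|$, which is finite by the previous corollary. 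Hence $\pi_1(S\tilde M)$ embeds as a subgroup of finite index in $\pi_1(SM)$, and it suffices to prove that $\pi_1(S\tilde M)$ is finite.

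First I would apply the long exact homotopy sequence of the fibration $S^{n-1}\hookrightarrow S\tilde M\xrightarrow{\tilde\pi}\tilde M$. Since $\tilde M$ is simply connected, the tail
$$\pi_2(\tilde M)\xrightarrow{\ \partial\ }\pi_1(S^{n-1})\to\pi_1(S\tilde M)\to\pi_1(\tilde M)=0$$
exhibits $\pi_1(S\tilde M)$ as the cokernel of the connecting homomorphism $\partial$. For $n\geq 3$ the fibre sphere satisfies $\pi_1(S^{n-1})=0$, whence $\pi_1(S\tilde M)=0$; thus $S\tilde M$ is the universal cover of $SM$ and $\pi_1(SM)$ is the deck transformation group, of order $|\pi_1(M)|$, which is finite.

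The remaining and genuinely delicate case is $n=2$, where the circle fibre contributes $\pi_1(S^1)=\mathbb Z$, an infinite group, so exactness alone does not close the matter. Here I would use that a compact simply connected surface must be $\tilde M=S^2$, and identify $S\tilde M$ via the bundle of directions with an oriented $S^1$-bundle over $S^2$ whose Euler number equals $\chi(S^2)=2$; then $\partial:\pi_2(S^2)=\mathbb Z\to\pi_1(S^1)=\mathbb Z$ is multiplication by $2$, so $\pi_1(S\tilde M)=\mathbb Z/2\mathbb Z$ is finite. In either case $\pi_1(S\tilde M)$ is finite, and therefore so is $\pi_1(SM)$. Finally, exactly as in the proof of Theorem \ref{thm2}, finiteness of $\pi_1(SM)$ forces $H^1_{dR}(SM)\cong\mathrm{Hom}(\pi_1(SM),\mathbb R)=0$. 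The main obstacle is precisely the low-dimensional case $n=2$: the infinite fundamental group of the circle fibre means that compactness of $\tilde M$ is not by itself enough, and one must invoke the non-vanishing of the Euler number of the sphere bundle over $S^2$ to conclude.
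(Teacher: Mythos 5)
Your proposal follows essentially the same route as the paper: pass to the universal cover $\tilde M$, use the exact homotopy sequence of the fibration $S^{n-1}\hookrightarrow S\tilde M\to\tilde M$ together with the compactness of $\tilde M$ established in the proof of Theorem \ref{thm2}, and descend along the covering $\bar p:S\tilde M\to SM$. The one substantive difference is that you treat the case $n=2$ separately, where $\pi_1(S^1)=\mathbb{Z}$ and the paper's blanket assertion that $\pi_1(S^{n-1})=0$ fails; your computation via the Euler number of the circle bundle over $\tilde M=S^2$, giving $\pi_1(S\tilde M)=\mathbb{Z}/2\mathbb{Z}$, correctly patches this gap, so your argument is in fact more complete than the one in the paper. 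For $n\geq 3$ the two arguments coincide.
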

\begin{proof}
Let $ \tilde M $ be the universal covering manifold of $M$ with the smooth covering map $p:\tilde M\longrightarrow M$. It is well known that the homotopic sequence of the fibre bundle $(S\tilde M,\tilde\pi,\tilde M,S^{n-1})$ is exact. That is
\begin{align}\label{sequence}
\cdots\longrightarrow\pi_1(S^{n-1})\longrightarrow\pi_1(S\tilde M)\longrightarrow\pi_1(\tilde M)\longrightarrow\cdots,
\end{align}
is exact. Since $\tilde M$ is simply connected, $\pi_1(\tilde M)=0$. We know that $\pi_1(S^{n-1})=0$. Thus by (\ref{sequence}) we get $\pi_1(S\tilde M)=0$. One can easily check that $\bar p:S\tilde M\longrightarrow SM $ is a smooth covering map. Therefore $ S\tilde M $ is the universal covering manifold of $SM$. According to the proof of Theorem \ref{thm2}, $\tilde M$ is compact and so is $S\tilde {M}$. Thus the fundamental group $\pi_1(SM)$ is finite and therefore $H^1_{dR}(SM)=0$.
\end{proof}



 Faculty of Mathematics, Amirkabir University of Technology  (Tehran Polytechnic), Tehran, Iran\\
 bidabad@aut.ac.ir\\
 m.yarahmadi@aut.ac.ir
\end{document}